\newcommand{\w}{\omega}
\newcommand{\defeq}{\overset{\mbox{\tiny\sf def}}=}
\newcommand{\F}{\mathcal F}
\newcommand{\M}{\mathcal M}
\newcommand{\U}{\mathcal U}
\newcommand{\IR}{\mathbb R}
\newcommand{\A}{\mathcal A}
\newcommand{\Tau}{\mathcal T}
\newcommand{\nw}{\mathsf{nw}}
\newcommand{\sw}{\mathsf{sw}}
\title{$Q$-spaces, perfect spaces and related cardinal characteristics of the continuum}
\author{Taras Banakh and Lidiya Bazylevych}
\address{Ivan Franko National University of Lviv, Ukraine}
\email{t.o.banakh@gmail.com}
\email{izar@litech.lviv.ua}
\subjclass{03E15, 03E17, 03E35, 03E50, 54A35, 54D10, 54H05}
\keywords{$Q$-space, perfect space, cardinal characteristic of the continuum}
\newtheorem{theorem}{Theorem}
\newtheorem{corollary}[theorem]{Corollary}
\newtheorem{proposition}[theorem]{Proposition}
\newtheorem{problem}[theorem]{Problem}
\newtheorem{question}[theorem]{Question}
\newtheorem{lemma}[theorem]{Lemma}
\begin{document}
\begin{abstract} A topological space $X$ is called a {\em $Q$-space} if every subset of $X$ is of type $F_\sigma$ in $X$. For $i\in\{1,2,3\}$ let $\mathfrak q_i$ be the smallest cardinality of a second-countable $T_i$-space which is not a $Q$-space. It is clear that $\mathfrak q_1\le\mathfrak q_2\le\mathfrak q_3$. For $i\in\{1,2\}$ we prove that $\mathfrak q_i$ is equal to the smallest cardinality of a second-countable $T_i$-space which is not perfect. Also we prove that $\mathfrak q_3$ is equal to the smallest cardinality of a submetrizable space, which is not a $Q$-space. Martin's Axiom implies that $\mathfrak q_i=\mathfrak c$ for all $i\in\{1,2,3\}$.
\end{abstract}
\maketitle

A topological space $X$  is called 
\begin{itemize}
\item {\em perfect} if every open subset is of type $F_\sigma$ in $X$;
\item a {\em $Q$-space} if every subset is of type $F_\sigma$ in $X$;
\item a {\em non-$Q$-space} if $X$ is not a $Q$-space.
\end{itemize}
A subset of topological space is {\em of type $F_\sigma$} if it can be written as the union of countably many closed sets. 

It is clear that every $Q$-space is perfect. Under Martin's Axiom, every metrizable separable space of cardinality $<\mathfrak c$ is a $Q$-space, see \cite[4.2]{Miller}.

For a class $\Tau$ of topological spaces, denote by $\mathfrak q_{\Tau}$ the smallest cardinality of a non-$Q$-space $X\in\Tau$. The cardinal $\mathfrak q_{\Tau}$ is well-defined only for classes $\Tau$ containing  non-$Q$-spaces. In this paper we study the cardinals $\mathfrak q_\Tau$ for classes $\Tau$ of topological spaces satisfying various separation properties.

A topological space $X$ is called
\begin{itemize}
\item a {\em $T_1$-space} if every finite subset is closed in $X$;
\item a {\em $T_2$-space} if $X$ is {\em Hausdorff}, which means that any distinct points in $X$ have disjoint neighborhoods;
\item a {\em $T_{2\frac12}$-space} if $X$ is {\em Urysohn}, which means that any distinct points in $X$ have disjoint closed neighborhoods;
\item {\em functionally Hausdorff} if for any distinct points $x,y\in X$ there exists a continuous function $f:X\to\IR$ such that $f(x)\ne f(y)$;
\item a {\em $T_3$-space} if $X$ is Hausdorff and every neighborhood of any point $x\in X$ contains a closed neighborhood of $x$;
\item {\em Tychonoff} if $X$ is Hausdorff and for any closed set $F\subseteq X$ and point $x\in X\setminus F$ there exists a continuous function $f:X\to\IR$ such that $f(x)=1$ and $f[F]\subseteq\{0\}$;
\item {\em submetrizable} if there exists a continuous bijective map $f:X\to Y$ onto a metric space $Y$;
\item {\em second-countable} if $X$ has a countable base of the topology.
\end{itemize}
For every topological space we have implications
$$
\xymatrix{
\mbox{metrizable}\ar@{=>}[r]\ar@{=>}[d]&\mbox{submetrizable}\ar@{=>}[r]&\mbox{functionally}\atop\mbox{Hausdorff}\ar@{=>}[d]\\
\mbox{Tychonoff}\ar@{=>}[r]&\mbox{$T_3$-space}\ar@{=>}[r]&\mbox{Urysohn}\ar@{=>}[r]&\mbox{Hausdorff}\ar@{=>}[r]&\mbox{$T_1$-space}
}
$$
Observe that the one-point compactification of the discrete space of cardinality $\w_1$ is not perfect and hence $\mathfrak q_{\Tau}=\w_1$ for the class $\Tau$ of (compact) Tychonoff spaces of weight $\le\w_1$. For classes $\Tau$ of second-countable spaces the cardinals $\mathfrak q_\Tau$ are more interesting.
  
For $i\in\{1,2,2\frac12,3\}$ denote by $\mathfrak q_i$ the smallest cardinality of a second-countable $T_i$-space which is not a $Q$-space. It is clear that $\mathfrak q_1\le\mathfrak q_2\le\mathfrak q_{2\frac12}\le\mathfrak q_3$. Since each second-countable $T_3$-space (of cardinality $<\mathfrak c$) is metrizable (and zero-dimensional), the cardinal $\mathfrak q_3$ coincides with the well-known cardinal $\mathfrak q_0$, defined as the smallest cardinality of a subset of $\IR$ which is not a $Q$-space. The cardinal $\mathfrak q_0$ is well-studied in Set-Theoretic Topology, see \cite[\S4]{Miller}, \cite{Brendle} or \cite{Ban}.
This cardinal has the following helpful property.

\begin{proposition}\label{p:sm} Every submetrizable space of cardinality $<\mathfrak q_0$ is a $Q$-space.
\end{proposition}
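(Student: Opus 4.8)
The plan is to strip off the two features of the statement one at a time: first the submetrizability (reducing to metric spaces), then the non-separability (reducing to separable spaces, where the definition of $\mathfrak q_0$ bites directly). To begin, let $X$ be submetrizable with $|X|<\mathfrak q_0$ and fix a continuous bijection $f\colon X\to Y$ onto a metric space $Y$; note $|Y|=|X|<\mathfrak q_0$. The key observation is that a continuous bijection reflects the $Q$-space property: for $A\subseteq X$ we have $A=f^{-1}[f[A]]$, so if $f[A]=\bigcup_{k}C_k$ with each $C_k$ closed in $Y$, then $A=\bigcup_k f^{-1}[C_k]$ is a union of closed subsets of $X$. Hence it suffices to prove that every metric space $Y$ with $|Y|<\mathfrak q_0$ is a $Q$-space.

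Next I would settle the separable case. If $Z$ is a separable metric space with $|Z|<\mathfrak q_0\le\mathfrak c$, then $Z$ is a second-countable $T_3$-space of cardinality $<\mathfrak c$, hence zero-dimensional, and therefore embeds into the Cantor cube $\{0,1\}^\w\subseteq\IR$. Its image is a subset of $\IR$ of cardinality $<\mathfrak q_0$, so it is a $Q$-space by the very definition of $\mathfrak q_0$; since being a $Q$-space is a topological and hereditary property, $Z$ itself is a $Q$-space.

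It remains to pass from separable to arbitrary metric spaces. The engine will be the following gluing lemma, whose proof uses the good behaviour of discrete families in metric spaces: if a space $Y$ is covered by closed $Q$-subspaces $\{C_{n,i}:n\in\w,\ i\in I_n\}$ such that for each $n$ the subfamily $\{C_{n,i}:i\in I_n\}$ is discrete in $Y$, then $Y$ is a $Q$-space. Indeed, for $A\subseteq Y$ one writes $A\cap C_{n,i}=\bigcup_m F^{n,i}_m$ with $F^{n,i}_m$ closed in $C_{n,i}$, hence in $Y$, and contained in $C_{n,i}$; for fixed $n,m$ the family $\{F^{n,i}_m:i\in I_n\}$ inherits discreteness, so its union is closed in $Y$, and collecting these countably many closed sets exhibits $A$ as an $F_\sigma$-set. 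Granting the lemma, the proposition follows once one produces, for a metric space $Y$ with $|Y|<\mathfrak c$, a $\sigma$-discrete closed cover by separable subspaces: each such piece has cardinality $<\mathfrak q_0$ and is therefore a $Q$-space by the previous paragraph. In other words, it suffices to show that a metric space all of whose separable subspaces are $Q$-spaces is itself a $Q$-space.

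The hard part will be producing this $\sigma$-discrete decomposition into separable pieces when $Y$ is \emph{not} separable, and this is the step I expect to be the main obstacle. I would build it from the $\sigma$-discrete structure of metric spaces (A.H. Stone's theorem), peeling off, scale by scale, the locally separable part of $Y$; being a locally separable metric space, this part splits as a topological sum of separable clopen pieces and thus contributes a single discrete family of separable closed sets. The delicate point — and the reason the cardinality hypothesis is essential — is that the ``core'' of points possessing no separable neighbourhood must be shown to be negligible: the expected mechanism is that a metric space in which every nonempty open set is non-separable contains a Cantor scheme and hence has cardinality $\ge\mathfrak c$, which is excluded by $|Y|<\mathfrak q_0\le\mathfrak c$. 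Arranging the peeling so that only countably many discrete families are needed, rather than a transfinite tower of them, is where the real work lies.
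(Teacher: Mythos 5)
Your first reduction (a continuous bijection onto a metric space reflects the $Q$-space property), your treatment of the separable case, and your gluing lemma for $\sigma$-discrete closed covers are all correct. The gap is in the step you yourself single out as the main obstacle: the claim that a metric space in which every nonempty open set is non-separable must contain a Cantor scheme and hence have cardinality $\ge\mathfrak c$ is false. Consider the set $Y$ of all finitely supported functions $x:\w\to\w_1$ with the metric $d(x,y)=2^{-\min\{n:x(n)\ne y(n)\}}$. Every ball $B(x,2^{-n})$ contains the uncountable $2^{-(n+1)}$-separated set of points obtained from $x$ by altering the value at coordinate $n+1$ and zeroing everything beyond, so no nonempty open subset of $Y$ is separable; yet $|Y|=\w_1$, which is $<\mathfrak c$ (indeed $<\mathfrak q_0$) in the very models where the proposition has content. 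The Cantor-scheme argument breaks because without completeness a nested sequence of closed balls can have empty intersection (here the missing limit points live in the completion $\w_1^\w$, which does have cardinality $\ge\mathfrak c$). So the ``core'' of points with no separable neighbourhood need not be negligible for cardinality reasons, and the transfinite peeling has no mechanism to terminate after countably many stages; whether every metric space of cardinality $<\mathfrak c$ even admits a $\sigma$-discrete closed cover by separable pieces is not something your argument establishes.

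The paper sidesteps the decomposition entirely by a global condensation: a metric space $Y$ with $|Y|<\mathfrak c$ has weight $\le\mathfrak c$, hence embeds into $J(\mathfrak c)^\w$, the countable power of the hedgehog with $\mathfrak c$ spikes; since $J(\mathfrak c)$ admits a continuous bijection onto a planar triangle, $J(\mathfrak c)^\w$ condenses onto the Hilbert cube, so $Y$ itself admits a continuous injection into a separable metric space of the same cardinality. This feeds directly into your (correct) first reduction and your separable case, with no Baire-category or local-separability issues arising. If you want to salvage your route, you would need to replace the false cardinality mechanism for the core by something like this condensation trick --- at which point the $\sigma$-discrete machinery becomes unnecessary.
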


\begin{proof} Let $X$ be a submetrizable space of cardinality $<\mathfrak q_0$. Find a  continuous bijective map $f:X\to Y$ onto a metrizable space $Y$. The metrizable space $Y$ has cardinality $|Y|=|X|<\mathfrak q_0\le\mathfrak c$ and weight $w(Y)\le\w\cdot|Y|\le\mathfrak c$. By \cite[4.4.9]{Eng}, $Y$ admits a topological embedding $h:Y\to J(\mathfrak c)^\w$ into the countable power of the hedgehog space $J(\mathfrak c)$ with $\mathfrak c$ many spikes. Here $J(\mathfrak c)$ is the set $\{x\in[0,1]^\mathfrak c:|\{\alpha\in\mathfrak c:x(\alpha)>0\}|\le 1\}$ endowed with the metric
$$d(x,y)=\max_{\alpha\in\mathfrak c}|x(\alpha)-y(\alpha)|.$$  
It is easy to see that the hedgehog space $J(\mathfrak c)$ admits a continuous bijective map onto the triangle $\{(x,y)\in[0,1]^2:x+y\le 1\}$ and hence $J(\mathfrak  c)^\w$ admits a continuous bijective map $\beta:J(\mathfrak c)^\w\to[0,1]^\w$ onto the Hilbert cube $[0,1]^\w$. Then $g\defeq\beta\circ f:X\to[0,1]^\w$ is a continuous injective map. The metrizable separable space $g[X]$ has cardinality $<\mathfrak q_0$ and hence is a $Q$-space. Then for every set $A\subseteq X$ it image $g[A]$ is of type $F_\sigma$ in $g[X]$. By the continuity of $g$, the preimage $g^{-1}[g[A]]=A$ of $g[A]$ is an $F_\sigma$-set in $X$, witnessing that $X$ is a $Q$-space.
\end{proof}

Now we prove some criteria of submetrizability among ``sufficiently small'' functionally Hausdorff spaces.

A family $\F$ of subsets of a topological space $X$ is called 
\begin{itemize}
\item {\em separating} if for any distinct points $x,y\in X$ there exists a set $F\in\F$ that contains $x$ but not $y$;
\item a {\em network} if  for every open set $U\subseteq X$ and point $x\in U$ there exists a set $F\in\F$ such that $x\in F\subseteq U$.
\end{itemize}

We say that a topological space $X$ is
\begin{itemize}
\item {\em Lindel\"of} if every open cover of $X$ has a countable subcover;
\item {\em hereditarily Lindel\"of} if every subspace of $X$ is Lindel\"of;
\item {\em $\nw$-countable} if $X$ has a countable network;
\item {\em $\sw$-countable} if $X$ has a countable separating family of open sets.
\end{itemize}

It is clear that every second-countable $T_1$-space is both $\nw$-countable and $\sw$-countable.

\begin{lemma}\label{l:hL} Every functionally Hausdorff space $X$ with hereditarily Lindel\"of square is submetrizable.
\end{lemma}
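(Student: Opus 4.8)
The plan is to produce a countable family of continuous real-valued functions on $X$ that separates points; the diagonal map into $\IR^\w$ is then a continuous injection into a separable metric space, which immediately yields submetrizability. The functionally Hausdorff hypothesis supplies, for each pair of distinct points, \emph{some} separating function, but a priori this family is uncountable. The whole content of the lemma will be in thinning it out to a countable subfamily, and this is exactly where the hereditarily Lindel\"of square enters.

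First I would move to the square. For a continuous function $f:X\to\IR$ set
$$U_f\defeq\{(x,y)\in X\times X:f(x)\ne f(y)\},$$
which is open in $X\times X$, being the preimage of the open set $\{(s,t)\in\IR^2:s\ne t\}$ under the continuous map $(x,y)\mapsto(f(x),f(y))$. Functional Hausdorffness says precisely that the sets $U_f$, with $f$ ranging over all continuous maps $X\to\IR$, cover the complement $X\times X\setminus\Delta$ of the diagonal $\Delta=\{(x,x):x\in X\}$: for distinct $x,y$ some continuous $f$ satisfies $f(x)\ne f(y)$, i.e.\ $(x,y)\in U_f$.

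Then I would invoke the hypothesis. The complement $X\times X\setminus\Delta$ is a subspace of $X\times X$, so by hereditary Lindel\"ofness of the square it is Lindel\"of. Hence the open cover $\{U_f\}$ of this subspace admits a countable subcover $\{U_{f_n}:n\in\w\}$. By construction the countable family $\{f_n:n\in\w\}$ separates the points of $X$: whenever $x\ne y$, the pair $(x,y)$ lies in some $U_{f_n}$, which means $f_n(x)\ne f_n(y)$.

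Finally, the diagonal map $g:X\to\IR^\w$, $g(x)=(f_n(x))_{n\in\w}$, is continuous (each coordinate is) and injective (by the separation property just established), and $\IR^\w$ is a separable metric space. Restricting the codomain to $g[X]$ yields a continuous bijection of $X$ onto a metric space, so $X$ is submetrizable. I expect the only substantive point to be the middle step: recognizing functional Hausdorffness as an open cover of $X^2\setminus\Delta$ and seeing that the hereditarily Lindel\"of square is exactly the tool that collapses an arbitrary separating family of functions to a countable one. The construction of $g$ and the verification of its properties are then routine.
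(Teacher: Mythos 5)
Your proof is correct and follows essentially the same route as the paper: both cover the off-diagonal $\{(x,y)\in X\times X:x\ne y\}$ by open sets arising from the separating functions, extract a countable subcover by hereditary Lindel\"ofness of the square, and conclude via the diagonal map into a countable product of lines. The only cosmetic difference is that you use the open sets $U_f=\{(x,y):f(x)\ne f(y)\}$ directly, whereas the paper uses product boxes $f_{a,b}^{-1}({\downarrow}\tfrac12)\times f_{a,b}^{-1}({\uparrow}\tfrac12)$ indexed by pairs of points.
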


\begin{proof}
Since $X$ is functionally Hausdorff, for any distinct points $a,b\in X$, there exists a continuous function $f_{a,b}:X\to\IR$ such that $f_{a,b}(a)=0$ and $f_{a,b}(b)=1$.
By the continuity of $f_{a,b}$, the sets $$f_{a,b}^{-1}({\downarrow}\tfrac12)\defeq\{x\in X:f_{a,b}(x)<\tfrac12\}\quad\mbox{and}\quad f^{-1}_{a,b}({\uparrow}\tfrac12)\defeq\{y\in X:f_{a,b}(y)>\tfrac12\}$$are open neighborhoods of the points $a,b$, respectively. 
Since the space $X\times X$ is hereditarily Lindel\"of, the open cover
$\big\{f^{-1}_{a,b}({\downarrow}\tfrac12)\times f^{-1}_{a,b}({\uparrow}\tfrac12):(a,b)\in \nabla_X\}$ of the subspace $$\nabla_X\defeq\{(x,y)\in X\times X:x\ne y\}$$of $X\times X$ has a countable subcover. Consequently, there exists a countable set $C\subseteq \nabla_X$ such that $\nabla_X=\bigcup_{(a,b)\in C}f^{-1}_{a,b}({\downarrow}\tfrac12)\times f^{-1}_{a,b}({\uparrow}\tfrac12).$

Consider the metrizable  space $\IR^C$ and the continuous function
$$f:X\to\IR^C,\quad f:x\mapsto (f_{a,b}(x))_{(a,b)\in C}.$$
This function is injective because for any distinct $x,y\in X$ there exists a pair $(a,b)\in C$ such that $(x,y)\in f^{-1}_{a,b}({\downarrow}\tfrac12)\times f^{-1}_{a,b}({\uparrow}\tfrac12)$ and hence $f_{a,b}(x)<\frac12<f_{a,b}(y)$, witnessing that  $f(x)\ne f(y)$.
\end{proof}

\begin{corollary}\label{c:nw} Every $\nw$-countable functionally Hausdorff space of cardinality $<\mathfrak q_0$ is a submetrizable $Q$-space.
\end{corollary}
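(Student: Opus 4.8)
The plan is to deduce this corollary by chaining together Lemma~\ref{l:hL} and Proposition~\ref{p:sm}. The only genuinely new content is to verify that a $\nw$-countable functionally Hausdorff space satisfies the hypothesis of Lemma~\ref{l:hL}, namely that its square is hereditarily Lindel\"of; once submetrizability is established, Proposition~\ref{p:sm} immediately upgrades the space to a $Q$-space using the cardinality bound $|X|<\mathfrak q_0$.

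First I would recall and briefly establish two standard facts about countable networks. (i) A space with a countable network is hereditarily Lindel\"of: any subspace inherits a countable network (by intersecting each network member with the subspace), and a space with a countable network $\F$ is Lindel\"of because any open cover $\U$ can be refined to the countable subfamily $\{F\in\F:F\subseteq U\text{ for some }U\in\U\}$, which still covers $X$, and from which one selects a single cover member per network member. (ii) The class of $\nw$-countable spaces is finitely productive: if $\F$ is a countable network for $X$, then $\{F\times F':F,F'\in\F\}$ is a countable network for $X\times X$, since every basic open rectangle $U\times V$ containing a point $(x,y)$ can be shrunk using a network member for $x$ inside $U$ and one for $y$ inside $V$.

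Combining (i) and (ii), the square $X\times X$ of a $\nw$-countable space $X$ is again $\nw$-countable and therefore hereditarily Lindel\"of. Since $X$ is functionally Hausdorff by assumption, Lemma~\ref{l:hL} applies and shows that $X$ is submetrizable. Finally, as $|X|<\mathfrak q_0$, Proposition~\ref{p:sm} guarantees that the submetrizable space $X$ is a $Q$-space, which completes the argument.

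The main obstacle, such as it is, lies in steps (i) and (ii): everything reduces to the purely topological observations that countable networks are finitely productive and force hereditary Lindel\"ofness. These facts are routine but essential, since it is precisely the hereditary Lindel\"ofness of the square that feeds into Lemma~\ref{l:hL}. No part of the argument needs the $\sw$-countability notion introduced nearby, nor any set-theoretic input beyond the definition of $\mathfrak q_0$ already exploited in Proposition~\ref{p:sm}.
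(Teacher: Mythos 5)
Your proposal is correct and follows essentially the same route as the paper: show the square is hereditarily Lindel\"of, apply Lemma~\ref{l:hL} to get submetrizability, then apply Proposition~\ref{p:sm}. The only difference is that you prove the standard facts about countable networks (finite productivity and hereditary Lindel\"ofness) inline, whereas the paper simply cites \cite[3.8.12]{Eng}; your verifications of those facts are accurate.
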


\begin{proof} Let $X$ be an $\nw$-countable functionally Hausdorff space. By \cite[3.8.12]{Eng}, the square $X\times X$ has countable network and is hereditarily Lindel\"of. By Lemma~\ref{l:hL}, $X$ is submetrizable and by Proposition~\ref{p:sm}, $X$ is a $Q$-space.
\end{proof}

Propositions~\ref{p:sm} and Corollary~\ref{c:nw} will help us to prove the following characterization of the cardinal $\mathfrak q_0$. 

\begin{proposition}\label{p:q0} The cardinal $\mathfrak q_0$ is equal to:
\begin{itemize}
\item the smallest cardinality of a submetrizable space which is not a $Q$-space;
\item the smallest cardinality of a non-perfect submetrizable space;
\item the smallest cardinality of an $\nw$-countable functionally Hausdorff non-$Q$-space;
\item the smallest cardinality of an non-perfect $\nw$-countable functionally Hausdorff space;
\item the smallest cardinality of second-countable functionally Hausdorff non-$Q$-space.
\item the smallest cardinality of non-perfect second-countable functionally Hausdorff space.
\end{itemize}
\end{proposition}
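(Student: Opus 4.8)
The plan is to show that all six listed cardinals coincide with $\mathfrak q_0$ by a sandwiching argument. I would first split the six quantities into two families: three ``non-$Q$-space'' cardinals (for submetrizable, for $\nw$-countable functionally Hausdorff, and for second-countable functionally Hausdorff spaces) and three ``non-perfect'' cardinals in the same three classes. The one structural fact linking the families is that every $Q$-space is perfect, so every non-perfect space is automatically a non-$Q$-space; consequently, within each fixed class, the non-perfect cardinal is never smaller than the corresponding non-$Q$-space cardinal, and any lower bound obtained for the non-$Q$-space cardinal transfers to the non-perfect one.

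For the lower bounds (all six cardinals $\ge\mathfrak q_0$) I would invoke the two preceding results directly. Proposition~\ref{p:sm} shows that a submetrizable space of cardinality $<\mathfrak q_0$ is a $Q$-space, hence perfect, so neither a submetrizable non-$Q$-space nor a non-perfect submetrizable space can have cardinality below $\mathfrak q_0$. Corollary~\ref{c:nw} does the same for $\nw$-countable functionally Hausdorff spaces; and since a second-countable functionally Hausdorff space is $\nw$-countable (its countable base is a countable network) and functionally Hausdorff, the same corollary governs the second-countable versions as well. This yields $\ge\mathfrak q_0$ for all six cardinals at once.

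For the upper bounds I would exhibit witnesses of cardinality $\mathfrak q_0$. For the three non-$Q$-space cardinals, any set $A\subseteq\IR$ of cardinality $\mathfrak q_0$ that fails to be a $Q$-space serves all three simultaneously, being second-countable metrizable and hence submetrizable, $\nw$-countable, and second-countable functionally Hausdorff. For the three non-perfect cardinals the witness must be non-metrizable, since subsets of $\IR$ are perfect, so I would refine the topology of such an $A$: fix $B\subseteq A$ which is not of type $F_\sigma$ in $A$, and let $\tau'$ be the topology on $A$ generated by the usual topology together with the single extra open set $B$. Adjoining the sets $V_n\cap B$ to a countable base $\{V_n\}$ of $A$ produces a countable base for $\tau'$, so $(A,\tau')$ is second-countable; and the identity onto $A$ with its metric topology is a continuous bijection, so $(A,\tau')$ is submetrizable, hence functionally Hausdorff and $\nw$-countable. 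Thus this single space lies in all three classes, and it remains only to verify that it is non-perfect.

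The main obstacle, and the only nonroutine computation, is checking that the $\tau'$-open set $B$ is not $F_\sigma$ in $(A,\tau')$. I would describe the $\tau'$-closed sets explicitly: since every $\tau'$-open set has the form $U\cup(V\cap B)$ with $U,V$ open in the usual topology, every $\tau'$-closed set has the form $(C\cap D)\cup(C\setminus B)$ with $C,D$ closed in the usual topology of $A$. Hence any $\tau'$-closed set $F$ with $F\subseteq B$ forces $C\subseteq B$, whence $F=C\cap D$ is closed in the usual topology. If $B$ were $F_\sigma$ in $\tau'$, say $B=\bigcup_n F_n$, then each $F_n\subseteq B$ would be closed in the usual topology of $A$, exhibiting $B$ as an $F_\sigma$-set there and contradicting the choice of $B$. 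This shows $(A,\tau')$ is non-perfect, giving the upper bounds for the three non-perfect cardinals and, together with the lower bounds, the equality of all six cardinals with $\mathfrak q_0$.
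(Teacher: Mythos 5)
Your proof is correct and takes essentially the same route as the paper: lower bounds for all six cardinals from Proposition~\ref{p:sm} and Corollary~\ref{c:nw}, and upper bounds via a single witness obtained by refining the topology of a non-$Q$ subset of $\IR$ of cardinality $\mathfrak q_0$ by one new open set and checking that the refined space is second-countable, submetrizable, and not perfect. The only (immaterial) difference is that you adjoin the non-$F_\sigma$ set $B$ itself and verify directly that it remains non-$F_\sigma$ by describing the $\tau'$-closed subsets of $B$, whereas the paper adjoins the complement of a non-$G_\delta$ set $A$ and verifies that $A$ remains non-$G_\delta$.
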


\begin{proof} Let 
\begin{itemize}
\item $\mathfrak q_{sm}$ be the smallest cardinality of a submetrizable non-$Q$-space;
\item $\mathfrak p_{sm}$ be the smallest cardinality of a non-perfect submetrizable space;
\item $\mathfrak q_{nw}$ be the smallest cardinality of an $\nw$-countable functionally Hausdorff non-$Q$-space;
\item $\mathfrak p_{nw}$ be the smallest cardinality of an non-perfect $\nw$-countable functionally Hausdorff space;
\item $\mathfrak q_{w}$ be the smallest cardinality of second-countable functionally Hausdorff non-$Q$-space.
\item$\mathfrak p_{w}$ be the smallest cardinality of non-perfect second-countable functionally Hausdorff space.
\end{itemize}
We should prove that all these cardinals are equal to $\mathfrak q_0$. The inclusions between corresponding classes of topological spaces yield the following diagram in which an arrow $\kappa\to\lambda$ between cardinals $\kappa,\lambda$ indicates that $\kappa\le\lambda$.
$$
\xymatrix{
\mathfrak p_{sm}\ar[r]&\mathfrak p_{nw}\ar[r]&\mathfrak p_{w}\\
\mathfrak q_{sm}\ar[r]\ar[u]&\mathfrak q_{nw}\ar[r]\ar[u]&\mathfrak q_{w}
\ar[u]}
$$
Proposition~\ref{p:sm} implies that $\mathfrak q_0\le\mathfrak q_{sm}$. To prove that all these cardinals are equal to $\mathfrak q_0$, it remains to prove that $\mathfrak p_w\le\mathfrak q_0$.

By the definition of the cardinal $\mathfrak q_0$, there exists a second-countable metrizable non-$Q$-space $X$ and hence $X$ contains  a subset $A$ which is not of type $G_\delta$ in $X$. Let $\tau'$ be the topology on $X$, generated by the subbase $\tau\cup\{X\setminus A\}$ where $\tau$ is the topology of the metrizable space $X$. It is clear that $X'=(X,\tau')$ is a second-countable space containing $A$ as a closed subset. Since $\tau\subseteq\tau'$, the space $X'$ is submetrizable and functionally Hausdorff. Assuming that $X'$ is perfect, we conclude that the closed set $A$ is equal to the intersection $\bigcap_{n\in\w}W_n$ of some open sets $W_n\in\tau'$. By the choice of the topology $\tau'$, for every $n\in\w$ there exists open sets $U_n,V_n\in \tau$ such that $W_n=U_n\cup(V_n\setminus A)$. It follows from $A\subseteq W_n=U_n\cup(V_n\setminus A)$ that $A=A\cap W_n=A\cap U_n\subseteq U_n$. Then 
$$A=\bigcap_{n\in\w}W_n=A\cap\bigcap_{n\in\w}W_n=\bigcap_{n\in\w}(A\cap W_n)=\bigcap_{n\in\w}(A\cap U_n)\subseteq \bigcap_{n\in\w}U_n\subseteq \bigcap_{n\in\w}W_n=A$$
and hence $A=\bigcap_{n\in\w}U_n$ is a $G_\delta$-set in $X$, which contradicts the choice of $A$. This contradiction shows that the functionally Hausdorff second-countable space $X'$ is not perfect and hence $\mathfrak p_w\le|X'|=\mathfrak q_0$.
\end{proof} 

Proposition~\ref{p:q0} suggests the following

\begin{question} Is $\mathfrak q_2=\mathfrak q_0$?
\end{question}

Repeating the argument of the proof of Proposition~\ref{p:q0}, we can prove the following characterization of the cardinals $\mathfrak q_i$ for $i\in\{1,2,2\frac12\}$.

\begin{proposition}\label{p:pi} Let $i\in\{1,2,2\frac12\}$. The cardinal $\mathfrak q_i$ is equal to
the smallest cardinality of a non-perfect second-countable $T_i$-space.
\end{proposition}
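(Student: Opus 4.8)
The plan is to prove two inequalities. Write $\mathfrak p_i$ for the smallest cardinality of a non-perfect second-countable $T_i$-space (the quantity the proposition asserts equals $\mathfrak q_i$). Since every $Q$-space is perfect, every non-perfect space fails to be a $Q$-space; hence every non-perfect second-countable $T_i$-space is in particular a second-countable $T_i$ non-$Q$-space, and so $\mathfrak q_i\le\mathfrak p_i$ is immediate. All the content lies in the reverse inequality $\mathfrak p_i\le\mathfrak q_i$.

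For the reverse inequality I would mimic the construction in the proof of Proposition~\ref{p:q0}. Fix a second-countable $T_i$-space $X$ with $|X|=\mathfrak q_i$ that is not a $Q$-space; then $X$ carries a subset that is not of type $F_\sigma$, and passing to its complement yields a set $A\subseteq X$ that is not of type $G_\delta$ in $X$. Letting $\tau$ be the topology of $X$, I would put $\tau'$ to be the topology generated by the subbase $\tau\cup\{X\setminus A\}$, so that $A$ becomes closed in $X'\defeq(X,\tau')$. Adjoining a single set to a countable subbase keeps the space second-countable, and since $\tau\subseteq\tau'$ the identity map $X'\to X$ is continuous.

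The first thing to check, and the genuinely new point compared with Proposition~\ref{p:q0}, is that $X'$ is still a $T_i$-space. This is exactly where the restriction $i\in\{1,2,2\frac12\}$ is used: each of these separation axioms is inherited by any finer topology. For $i=1$ finite sets remain closed; for $i=2$ disjoint $\tau$-open neighborhoods of distinct points remain disjoint and $\tau'$-open; and for $i=2\frac12$ one uses that a $\tau$-closed set stays $\tau'$-closed, since its complement is $\tau$-open and hence $\tau'$-open, so that disjoint $\tau$-closed neighborhoods survive as disjoint $\tau'$-closed neighborhoods. I expect this separation-axiom bookkeeping to be the main obstacle; in particular it breaks down for $i=3$ precisely because regularity need not pass to a refinement, which is why $\mathfrak q_3=\mathfrak q_0$ is handled separately via submetrizability.

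It then remains to verify that $X'$ is not perfect, and here the argument of Proposition~\ref{p:q0} transcribes verbatim. Every $\tau'$-open set has the form $U\cup(V\setminus A)$ for some $U,V\in\tau$. If $X'$ were perfect, the closed set $A$ would equal $\bigcap_{n\in\w}W_n$ for $\tau'$-open sets $W_n=U_n\cup(V_n\setminus A)$; intersecting with $A$ forces $A=A\cap U_n\subseteq U_n$ for every $n$, whence $A=\bigcap_{n\in\w}U_n$ is a $G_\delta$-set in $X$, contradicting the choice of $A$. Thus $X'$ is a non-perfect second-countable $T_i$-space of cardinality $\mathfrak q_i$, giving $\mathfrak p_i\le|X'|=\mathfrak q_i$ and completing the proof.
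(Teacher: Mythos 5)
Your proof is correct and is essentially the paper's own argument: the paper proves this proposition by simply invoking the construction from Proposition~\ref{p:q0} (refining the topology by declaring $X\setminus A$ open for a non-$G_\delta$ set $A$), which is exactly what you do. Your explicit verification that the axioms $T_1$, $T_2$, $T_{2\frac12}$ survive passage to a finer topology is the one detail the paper leaves implicit, and you have handled it correctly.
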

 
\begin{proposition}\label{p:sw} Every $\sw$-countable space of cardinality $<\mathfrak q_1$ is a $Q$-space.
\end{proposition}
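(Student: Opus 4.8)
The plan is to mirror the topology-coarsening device used in the proof of Proposition~\ref{p:q0} and to transfer the $Q$-space property across a continuous bijection, exactly as in Proposition~\ref{p:sm}. Concretely, I would fix a countable separating family $\{U_n:n\in\w\}$ of open subsets of $X$, witnessing the $\sw$-countability of $X$, and let $\tau'$ be the topology on the underlying set of $X$ generated by $\{U_n:n\in\w\}$ as a subbase. Since every $U_n$ is open in the original topology $\tau$ of $X$, we have $\tau'\subseteq\tau$, so the identity map $\mathrm{id}:(X,\tau)\to(X,\tau')$ is continuous.

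Next I would check that $(X,\tau')$ meets the requirements in the definition of $\mathfrak q_1$. Being generated by a countable subbase, the topology $\tau'$ has a countable base, namely the finite intersections of members of $\{U_n:n\in\w\}$, so $(X,\tau')$ is second-countable. For the $T_1$ axiom, observe that for any distinct points $x,y\in X$ the separating property applied to the ordered pair $(y,x)$ yields some $U_n$ with $y\in U_n$ and $x\notin U_n$; thus $U_n$ is a $\tau'$-open neighborhood of $y$ missing $x$, which shows that the singleton $\{x\}$ is $\tau'$-closed. Hence $(X,\tau')$ is a second-countable $T_1$-space of cardinality $|X|<\mathfrak q_1$, and therefore a $Q$-space, by the definition of $\mathfrak q_1$ as the least cardinality of a second-countable $T_1$-space that is not a $Q$-space.

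Finally I would transfer the $F_\sigma$-property back along the identity map. Given an arbitrary set $A\subseteq X$, the $Q$-space $(X,\tau')$ lets us write $A=\bigcup_{n\in\w}C_n$ for some $\tau'$-closed sets $C_n$. Because $\tau'\subseteq\tau$, every $\tau'$-closed set is also $\tau$-closed, so $A$ is of type $F_\sigma$ in $(X,\tau)$; as $A$ was arbitrary, $(X,\tau)$ is a $Q$-space. The only step requiring genuine care is the verification that $\tau'$ is $T_1$ rather than merely $T_0$: this is precisely where the separating family must be invoked for \emph{both} orderings of a pair of points, and it is exactly what matches the construction to the $T_1$-flavoured cardinal $\mathfrak q_1$. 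Everything else is a routine transfer of the kind already carried out in Propositions~\ref{p:sm} and~\ref{p:q0}.
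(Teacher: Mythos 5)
Your proposal is correct and follows essentially the same route as the paper: pass to the coarser second-countable topology generated by the countable separating family, invoke the definition of $\mathfrak q_1$ to see that the coarsened space is a $Q$-space, and pull the $F_\sigma$-decompositions back along the continuous identity map. The only difference is that you spell out the verification of the $T_1$ axiom, which the paper leaves as an observation.
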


\begin{proof} Let $X$ be an $\sw$-countable space of cardinality $<\mathfrak q_1$. By the $\sw$-countability of $X$, there exists a countable separating family $\U$ of open sets in $X$. Consider the topology $\tau$ on $X$ generated by the subbase $\U$ and observe that $X_\tau\defeq (X,\tau)$ is a second-countable $T_1$-space of cardinality $<\mathfrak q_1$. The definition of $\mathfrak q_1$ ensures that $X_\tau$ is a $Q$-space. Then every set $A\subseteq X$ is an $F_\sigma$ set in $X_\tau$. Since the identity map $X\to X_\tau$ is continuous, the set $A$ remains of type $F_\sigma$ in $X$, witnessing that $X$ is a $Q$-space.
\end{proof}

\begin{proposition} The cardinal $\mathfrak q_1$ is equal to
\begin{enumerate}
\item the smallest cardinality of an $\sw$-countable non-$Q$-space;
\item  the smallest cardinality of a non-perfect $\sw$-countable space.
\end{enumerate}
\end{proposition}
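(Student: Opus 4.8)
The plan is to denote by $\mathfrak q_{\sw}$ the smallest cardinality of an $\sw$-countable non-$Q$-space and by $\mathfrak p_{\sw}$ the smallest cardinality of a non-perfect $\sw$-countable space, and then to show $\mathfrak q_1=\mathfrak q_{\sw}=\mathfrak p_{\sw}$ by squeezing both cardinals between $\mathfrak q_1$ from the two sides. First I would record the trivial inequalities coming from the inclusions of the relevant classes. Every second-countable $T_1$-space is $\sw$-countable, so any witness for $\mathfrak q_1$ (a second-countable $T_1$ non-$Q$-space) is in particular an $\sw$-countable non-$Q$-space; hence $\mathfrak q_{\sw}\le\mathfrak q_1$. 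Since every $Q$-space is perfect, a non-perfect space is automatically a non-$Q$-space, so every witness for $\mathfrak p_{\sw}$ is also an $\sw$-countable non-$Q$-space, which gives $\mathfrak q_{\sw}\le\mathfrak p_{\sw}$.

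For the reverse direction in (1), I would invoke Proposition~\ref{p:sw}: it guarantees that every $\sw$-countable space of cardinality $<\mathfrak q_1$ is a $Q$-space, so no $\sw$-countable non-$Q$-space can have cardinality below $\mathfrak q_1$, i.e. $\mathfrak q_1\le\mathfrak q_{\sw}$. Combined with $\mathfrak q_{\sw}\le\mathfrak q_1$ this yields $\mathfrak q_1=\mathfrak q_{\sw}$, which is statement (1). For statement (2) it then suffices to check $\mathfrak p_{\sw}\le\mathfrak q_1$, since $\mathfrak q_1=\mathfrak q_{\sw}\le\mathfrak p_{\sw}$ is already in hand. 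Here I would apply Proposition~\ref{p:pi} with $i=1$, which identifies $\mathfrak q_1$ with the smallest cardinality of a non-perfect second-countable $T_1$-space. Choosing such a space $X$ with $|X|=\mathfrak q_1$, I note that $X$, being second-countable and $T_1$, is $\sw$-countable, and, being non-perfect, it is exactly a non-perfect $\sw$-countable space; hence $\mathfrak p_{\sw}\le|X|=\mathfrak q_1$, completing the chain $\mathfrak q_1=\mathfrak q_{\sw}=\mathfrak p_{\sw}$.

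The bulk of the work has already been absorbed into Propositions~\ref{p:sw} and~\ref{p:pi}, so there is no genuine obstacle left; the proof is a bookkeeping argument about inclusions of classes. The only points that require a moment's care are the two implications being used silently---that a second-countable $T_1$-space is $\sw$-countable, and that a $Q$-space is perfect---and the observation that the extremal space furnished by Proposition~\ref{p:pi} for $\mathfrak q_1$ can be reused verbatim as a witness for $\mathfrak p_{\sw}$, so that no new space needs to be constructed (in contrast with the proof of Proposition~\ref{p:q0}).
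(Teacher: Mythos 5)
Your proof is correct and follows essentially the same route as the paper: both arguments close the chain $\mathfrak q_1\le\mathfrak q_{sw}\le\mathfrak p_{sw}\le\mathfrak q_1$ using Proposition~\ref{p:sw} for the first inequality and Proposition~\ref{p:pi} (with $i=1$) plus the inclusion of second-countable $T_1$-spaces into $\sw$-countable spaces for the last. The extra inequality $\mathfrak q_{sw}\le\mathfrak q_1$ you record at the start is harmless but redundant, since it already follows from the closed chain.
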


\begin{proof} Let 
\begin{itemize}
\item $\mathfrak q_{sw}$ be the smallest cardinality of an $\sw$-countable non-$Q$-space; 
\item $\mathfrak p_{sw}$ be  the smallest cardinality of a non-perfect $\sw$-countable space;
\item $\mathfrak p_{1}$ be the smallest cardinality of a non-perfect second-countable $T_1$-space;
\end{itemize} Propositions~\ref{p:sw}, \ref{p:pi} and the definitions of the cardinals $\mathfrak q_{sw},\mathfrak p_{sw},\mathfrak p_1$ imply that
$$\mathfrak q_1\le\mathfrak q_{sw}\le\mathfrak p_{sw}\le\mathfrak p_{1}=\mathfrak q_1$$ and hence $\mathfrak q_1=\mathfrak q_{sw}=\mathfrak p_{sw}$.
\end{proof}

Finally we establish a non-trivial lower bound on the cardinal $\mathfrak q_1$ using the cardinal characteristic $\mathfrak{adp}$, which is intermediate between the cardinals $\mathfrak{ap}$ and $\mathfrak{dp}$ introduced and studied by Brendle \cite{Brendle}. To introduce these cardinals, we need to recall three notions.

A family of sets $\A$ is called {\em almost disjoint} if $A\cap B$ is finite for any distinct sets $A,B\in\A$.

Two families of sets $\A,\mathcal B$ are called {\em orthogonal} if $A\cap B$ is finite for every $A\in\A$ and $B\in\mathcal B$.

We shall say that a family $\A$ of sets is {\em weakly separated} from a family of sets $\mathcal B$ if there exists a set $D$ such that for every $A\in\A$ the intersection $A\cap D$ is infinite and for every $B\in\mathfrak B$ the intersection $B\cap D$ is finite.

Observe that the notion of orthogonality is symmetric whereas  the weak separatedness is not.

For any set $X$ we denote by $[X]^\w$ and $[X]^{<\w}$ the families of infinite and finite subsets of $X$, respectively.

Let 
\begin{itemize}
\item $\mathfrak{ap}$ be the smallest cardinality of an almost disjoint family $\A\subseteq[\w]^\w$ that contains a subfamily $\mathcal B\subseteq\A$ which cannot be weakly separated from $\A\setminus\mathcal B$;
\item $\mathfrak{dp}$ be the smallest cardinality of the union $\A\cup\mathcal B$ of two orthogonal families $\A,\mathcal B\subseteq[\w]^\w$ such that $\A$ cannot be weakly separated from $\mathfrak B$;
\item $\mathfrak{adp}$ be the smallest cardinality of the union $\A\cup\mathcal B$ of two orthogonal families $\A,\mathcal B\subseteq[\w]^\w$ such that $\A$ is almost disjoint and cannot be weakly separated from $\mathcal B$. 
\end{itemize}

It is clear that $$\mathfrak{dp}\le\mathfrak{adp}\le\mathfrak{ap}.$$ According to \cite{Brendle} we also have the inequalities
$$\mathfrak p\le\mathfrak{dp}\le\mathfrak{ap}\le\min\{\mathfrak q_0,\mathrm{add}(\mathcal M)\},$$
where $\mathrm{add}(\M)$ is the smallest cardinality of a family $\A$ of meager subsets of the real line whose union $\bigcup\A$ is not meager in $\mathbb R$, and $\mathfrak p$ is the smallest cardinality of a family $\mathcal B\subseteq[\w]^\w$ such that for every finite subfamily $\F\subseteq\mathcal B$ the intersection $\bigcap\F$ is infinite but for every infinite set $I\subseteq\w$ there exists a set $F\in\F$ such that $I\cap F$ is finite. It is well-known (see \cite{Blass} or \cite{Vaughan})  that $\mathfrak p=\mathfrak c$ under Martin's Axiom, where $\mathfrak c$ stands for the cardinality of continuum.

\begin{proposition}\label{p:adp-q} Every second-countable $T_1$-space $X$ of cardinality $|X|<\mathfrak{adp}$ is a $Q$-space and hence $$\mathfrak{p}\le\mathfrak{dp}\le\mathfrak{adp}\le\mathfrak q_1\le\mathfrak q_2\le\mathfrak q_{2\frac12}\le\mathfrak q_3=\mathfrak q_0.$$
\end{proposition}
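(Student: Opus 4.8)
The plan is to prove the contrapositive of the first assertion: if a second-countable $T_1$-space $X$ is not a $Q$-space, then $|X|\ge\mathfrak{adp}$. The displayed chain then follows at once, since the inequalities $\mathfrak p\le\mathfrak{dp}\le\mathfrak{adp}$ are recorded above, $\mathfrak q_1\le\mathfrak q_2\le\mathfrak q_{2\frac12}\le\mathfrak q_3$ is clear, and the equality $\mathfrak q_3=\mathfrak q_0$ was noted earlier. So I fix a witnessing set $E\subseteq X$ that is not of type $F_\sigma$ and put $E'\defeq X\setminus E$, which is then not of type $G_\delta$. I want to manufacture from $X$ and $E$ two orthogonal families $\A,\mathcal B\subseteq[\w]^\w$ with $\A$ almost disjoint and not weakly separated from $\mathcal B$, and with $|\A\cup\mathcal B|\le|X|$; the definition of $\mathfrak{adp}$ will then give $\mathfrak{adp}\le|X|$.

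First I would normalize the base. Fix a countable base $\{U_n:n\in\w\}$ of $X$ and enlarge it so that it contains $X$, is closed under finite intersections, and lists each of its members under infinitely many indices; none of this changes the topology, and it guarantees that the trace $N(x)\defeq\{n\in\w:x\in U_n\}$ is infinite for every $x\in X$. Because $X$ is $T_1$, the intersection of all basic neighbourhoods of a point equals that point, so for each $x$ I can pick a $\subseteq$-decreasing sequence $(O^x_k)_{k\in\w}$ of basic neighbourhoods of $x$ with $\bigcap_k O^x_k=\{x\}$, carried by strictly increasing indices $n^x_0<n^x_1<\cdots$ (available thanks to the infinite repetition), and set $a_x\defeq\{n^x_k:k\in\w\}\in[\w]^\w$. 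The one computation that drives everything is this: if $y\ne x$, then, since the $O^x_k$ shrink to $\{x\}$, some $O^x_{k_0}$ omits $y$ and hence so do all later ones, whence $a_x\cap N(y)\subseteq\{n^x_k:k<k_0\}$ is finite. In particular $\{a_x:x\in X\}$ is almost disjoint.

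With this in hand I would set $\A\defeq\{a_x:x\in E'\}$ and $\mathcal B\defeq\{N(x):x\in E\}$. The finiteness just proved (with $y\in E$) shows $\A$ and $\mathcal B$ are orthogonal, and (with $y=x'\in E'$) that $\A$ is almost disjoint; clearly $|\A\cup\mathcal B|\le|X|$. It remains to check that $\A$ cannot be weakly separated from $\mathcal B$. Suppose some $D\subseteq\w$ witnessed weak separation, so $|a_x\cap D|=\infty$ for $x\in E'$ and $|N(x)\cap D|<\infty$ for $x\in E$. Consider $M_D\defeq\{x\in X:|N(x)\cap D|=\infty\}=\bigcap_{j\in\w}\bigcup_{n\in D,\,n\ge j}U_n$, which is visibly of type $G_\delta$. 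Since $a_x\subseteq N(x)$, the first condition forces $E'\subseteq M_D$, while the second gives $M_D\cap E=\emptyset$; hence $M_D=E'$ would be $G_\delta$, contradicting the choice of $E$. Thus $\A,\mathcal B$ witness $\mathfrak{adp}\le|X|$, completing the contrapositive.

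I expect the main obstacle to be reconciling two opposing pulls on the encoding: almost disjointness of $\A$ demands that each $a_x$ be thin, whereas detecting membership in the $G_\delta$-set $M_D$ from the ``finite'' side demands the full trace $N(x)$. The resolution is precisely the asymmetry built into $\mathfrak{adp}$: I use shrinking neighbourhood bases $a_x$ on the almost-disjoint side (where thinness, orthogonality and almost disjointness all come for free, because neighbourhoods eventually exclude any other point), but keep the uncut traces $N(x)$ on the merely orthogonal side $\mathcal B$, so that ``$|N(x)\cap D|<\infty$'' genuinely certifies $x\notin M_D$. The lack of a clopen base — which is what keeps $M_D$ of type $G_\delta$ instead of forcing in the closed complements $X\setminus U_n$ — is exactly what blocks the cleaner $\mathfrak{ap}$-style argument and makes this two-sided design necessary.
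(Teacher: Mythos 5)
Your proof is correct and is essentially the paper's argument in contrapositive form: the paper likewise pairs shrinking basic neighbourhood filters at the points of one side (giving the almost disjoint family) with full traces $\{n:y\in U_n\}$ at the points of the other (giving the orthogonal family), and your $G_\delta$-set $M_D$ coincides with the paper's $\bigcap_{F\in[D]^{<\w}}\bigcup_{n\in D\setminus F}U_n$. The only cosmetic differences are that the paper argues directly (producing the $G_\delta$ representation from the weak separator) rather than by contradiction, and absorbs your base normalization into an ad hoc choice of the index sets $I_x$.
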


\begin{proof} Given any subset $A\subseteq X$, we should prove that $A$ is of type $G_\delta$ in $X$. Let $\mathcal B=\{U_n\}_{n\in\w}$ be a countable base of the topology of the space $X$.

For every $y\in X\setminus A$, let $I_y=\{n\in \w:y\in U_n\}$. Since $\{U_n\}_{n\in\w}$ is a base of the topology of $X$, for every $x\in A$ there exists an infinite set $I_x\subseteq\w$ satisfying two conditions:
\begin{itemize}
\item for any numbers $n<m$ in $I_x$ we have $x\in U_m\subseteq U_n$;
\item for every neighborhood $O_x$ of $x$ in $X$ there exists $n\in I_x$ such that $x\in U_n\subseteq O_x$.
\end{itemize}
We claim that for any $x\in A$ and $y\in B$ the intersection $I_x\cap I_y$ is finite.
Indeed, by the choice of the  set $I_x$, there exists $n\in\w$ such that $U_n\subseteq X\setminus\{y\}$. Then for every $m\ge n$ we have $U_m\subseteq U_n$
and hence $y\notin U_m$ and $m\notin I_y$. Therefore, the families $\{I_x:x\in A\}$ and $\{I_y:y\in X\setminus A\}$ are orthogonal. The same argument shows that the family $\{I_x:x\in A\}$ is almost disjoint.

Since $|A\cup B|=|X|<\mathfrak{adp}$, the family $\{I_x:x\in A\}$ can be weakly separated from the family $\{I_y:y\in X\setminus A\}$ and hence  there exists a set $D\subseteq\w$ such that for any $x\in A$ the intersection $I_x\cap D$ is infinite and for any $x\in B$ the intersection $I_y\cap D$ is finite. For every finite set $F\subseteq D$ consider the open subset $$W_F\defeq\bigcup_{n\in D\setminus F}U_n$$ of $X$. For every $x\in A$ the infinite set $I_x\cap D$ contains a number $n\notin F$ and then $x\in U_n\subseteq W_F$. Therefore $G\defeq\bigcap_{F\in[D]^{<\w}}W_F$ is a $G_\delta$-set containing $A$. On the other hand, for every $x\in X\setminus A$, the intersection $F=I_y\cap D=\{n\in D:y\in U_n\}$ is finite and hence $y\notin \bigcup_{n\in D\setminus F}U_n=W_F$. Therefore, $A=\bigcap_{F\in[D]^{<\w}}W_F$ is a $G_\delta$ set in $X$ witnessing that $X$ is a $Q$-space.
\end{proof}

Since $\mathfrak p=\mathfrak c$ under Martin's Axiom, Proposition~\ref{p:adp-q} implies the following corollary.

\begin{corollary} Under Martin's Axiom, $\mathfrak q_i=\mathfrak c$ for every $i\in\{0,1,2,2\frac12,3\}$.
\end{corollary}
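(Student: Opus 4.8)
The plan is to combine the chain of inequalities established in Proposition~\ref{p:adp-q} with the classical fact that Martin's Axiom forces $\mathfrak p=\mathfrak c$. First I would recall that, under Martin's Axiom, $\mathfrak p=\mathfrak c$; this is the well-known result noted earlier in the paper (see \cite{Blass} or \cite{Vaughan}). Next I would invoke Proposition~\ref{p:adp-q}, which already records the inequality chain $\mathfrak p\le\mathfrak q_1\le\mathfrak q_2\le\mathfrak q_{2\frac12}\le\mathfrak q_3=\mathfrak q_0$, pinning all of the cardinals $\mathfrak q_i$ between $\mathfrak p$ and $\mathfrak q_0$.

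The only remaining ingredient is the soft upper bound $\mathfrak q_0\le\mathfrak c$, which holds outright in ZFC: the real line $\IR$ is a second-countable metrizable space of cardinality $\mathfrak c$ that fails to be a $Q$-space, since not every subset of $\IR$ is of type $F_\sigma$; hence the smallest cardinality of a non-$Q$-space subset of $\IR$ is at most $\mathfrak c$. Putting these facts together, under Martin's Axiom we obtain
$$\mathfrak c=\mathfrak p\le\mathfrak q_1\le\mathfrak q_2\le\mathfrak q_{2\frac12}\le\mathfrak q_3=\mathfrak q_0\le\mathfrak c,$$
and the squeeze forces every term in this chain to equal $\mathfrak c$. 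Since $\mathfrak q_3=\mathfrak q_0$ by the same proposition, the index $i=0$ is covered as well, so $\mathfrak q_i=\mathfrak c$ for all $i\in\{0,1,2,2\frac12,3\}$.

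I do not expect any genuine obstacle here: all the substantive work is already carried out in Proposition~\ref{p:adp-q}, and this corollary is a pure bookkeeping consequence of it together with the standard evaluation $\mathfrak p=\mathfrak c$ under MA. The single point that deserves a moment's attention is verifying that the index $i=0$ is included, which is immediate from the identity $\mathfrak q_3=\mathfrak q_0$ appearing in the proposition, and confirming the trivial bound $\mathfrak q_0\le\mathfrak c$ so that the squeeze closes.
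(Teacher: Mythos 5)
Your proposal is correct and follows exactly the paper's route: the paper derives the corollary in one line from Proposition~\ref{p:adp-q} together with $\mathfrak p=\mathfrak c$ under Martin's Axiom, with the bound $\mathfrak q_0\le\mathfrak c$ left implicit. Your explicit verification that $\IR$ itself witnesses $\mathfrak q_0\le\mathfrak c$ (since only $\mathfrak c$ of its $2^{\mathfrak c}$ subsets are $F_\sigma$) is a harmless and accurate filling-in of that implicit step.
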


It would be interesting to have any additional information on (im)possible inequalities between the cardinals $\mathfrak q_i$ and other cardinal characteristics of the continuum. In particular, the following questions are natural and seem to be  open.

\begin{problem}
\begin{enumerate}
\item Is $\mathfrak{ap}\le\mathfrak q_2$? 
\item Is $\mathfrak q_1\le\mathrm{add}(\mathcal M)$?
\item Is $\mathfrak q_1=\mathfrak q_2$? 
\item Is the strict inequality $\mathfrak q_1<\mathfrak q_0$ consistent?
\end{enumerate}
\end{problem}

Also the position of the new cardinal $\mathfrak{adp}$ in the interval $[\mathfrak{dp},\mathfrak{ap}]$ is not clear.

\begin{problem}\label{prob:adp}
\begin{enumerate}
\item Is $\mathfrak{adp}=\mathfrak{dp}$ in ZFC?
\item Is $\mathfrak{adp}=\mathfrak{ap}$ in ZFC?
\end{enumerate}
\end{problem}
By \cite{Brendle}, the strict inequality $\mathfrak{dp}<\mathfrak{ap}$ is consistent, so one of the questions in Problem~\ref{prob:adp} has negative answer. But which one? Or both?

\end{document}